 \newtheorem{theorem}{Theorem}[section]
 \newtheorem{corollary}[theorem]{Corollary}
 \newcommand{\Ker}{\mathop{\rm Ker}\nolimits}
 \newcommand{\RR}{{\mathbb{R}}}
\begin{document}

\title{GEOMETRY OF COMPACT LIFTING SPACES}

\author[G. Conner]{Gregory R. Conner$^1$}
\address{ 
Department of Mathematics, 
Brigham Young University, 
Provo, UT 84602, USA} 
\email{conner@mathematics.byu.edu}

\author[W. Herfort]{Wolfgang Herfort}
\address{ 
Institute for Analysis and Scientific Computation  
Technische Universit\"at Wien 
Wiedner Hauptstra\ss e 8-10/101
Vienna, Austria} 
\email{wolfgang.herfort@tuwien.ac.at}

\author[P. Pave\v si\'c]{Petar Pave\v si\'c $^2$}
\address{ 
Faculty of Mathematics and Physics  
University of Ljubljana 
Jadranska 21
Ljubljana, Slovenia} 
\email{petar.pavesic@fmf.uni-lj.si}
\thanks{$^1$ The first author is supported by Simons Foundation collaboration grant 246221 and $^2$ the third author was partially supported by the Slovenian Research Agency grant P1-0292
and the Slovenian Research Agency project J1-7025.}


\date{\today}

\begin{abstract}

 We study a natural generalization of inverse systems of finite regular covering spaces. A limit of such a system is a fibration whose fibres are profinite topological groups. However, as shown in \cite{{Conner-Herfort-Pavesic}}, there are many fibrations whose fibres are profinite groups,
which are far from being inverse limits of coverings. We characterize profinite fibrations among a large class of fibrations and relate the profinite topology on the fundamental group of the base with the action of the fundamental group on the fibre, and develop a version of the Borel 
construction for fibrations whose fibres are profinite groups. 
\ \\[3mm]
{\it Keywords}: covering projection, lifting projection, fundamental group, inverse system, deck transformations, profinite group, group completion. \\[1mm]
{\it AMS classification:} Primary 55R05; Secondary 57M10, 54D05.
\end{abstract}

\maketitle


\ \\[-20mm]

\section{Introduction} 
A foundational result states that every Hausdorff, compact and totally disconnected group is \emph{profinite},
i.e. is an inverse limit finite groups. We extend this theorem to fibrations and relate local properties of a fibration with profinite topological groups as fibres,  to its global properties, namely being an inverse limit of finite coverings. 

An action of a topological group on a fibration can paint an intimate picture of the group, even more so than the analogous covering space action of a discrete group (consider the action of a $p$-adic group on a solenoid). 
Our main result will make this relation clearly visible. 

The notion of finite covering spaces is a basic tools in a wide array of mathematical topics including group theory, algebraic geometry, combinatorics,
 manifold theory and analysis.  One can generalize the notion of covering space by considering Hurewicz fibrations with unique path lifting, however this would allow for pathological fibers, such as the psuedocircle, and does not reveal a natural canonical group action like a covering space. Alternatively, one could consider inverse limits of finite covering spaces. These are very amenable to study, but seem to form only a very small subset of the former.  Our work shows that under reasonable hypotheses these two notions coincide.

{\bf Theorem} {\it Let $X$ be a path-connected and locally contractible space with a residually finite fundamental group
and let $p\colon E\to X$ be a fibration whose fibres are 
compact and totally path-disconnected. Then $p$ is an inverse limit of finite regular coverings if, and only if it is regular, $\pi_1$-profinite and has a dense leaf.}

In the rest of this introductory section we will explain some topological background about inverse limits of coverings and related fibrations with unique path-lifting property (which we call \emph{lifting projections} for short). 
Section 2 is preparatory and contains basic terminology and general 
facts about lifting projections. In Section 3 we prove the first 
basic result that compactness of the fibres and the transitive action
of the group of deck transformation already imply that the fibres of 
a lifting projection can be endowed with a natural profinite group
structure. In Section 4 we discuss and carefully compare the two ways 
in which the fundamental group of the base acts on the fibres of 
a lifting projection, and relate them to the profinite topologies
on the fundamental group itself. Finally, in Section 5 we develop 
a version of the 
Borel construction for lifting projections and use it as a tool to pass
from intrinsic properties of a lifting projection to an explicit system of finite coverings converging to it. 

Let $X$ be a topological space (that we assume path-connected for convenience), and let 
$$\xymatrix{
\widetilde{X}_1  \ar[d]_{p_1} & \widetilde{X}_2 \ar[l] \ar[d]_{p_2} & \widetilde{X}_3 \ar[l] \ar[d]_{p_3} & \cdots\ar[l]\\
X \ar@{=}[r] & X \ar@{=}[r] & X \ar@{=}[r] & \cdots }$$
be an inverse sequence of covering projections over $X$. The inverse limit of the sequence
may not be a covering projection in general, but it is always a Hurewicz fibration with unique path-lifting property and with totally path-disconnected fibres. 
These properties were established by Spanier \cite[Chapter II]{Spanier} who showed that Hurewicz fibrations with unique path-lifting 
property share many common features with covering projections, including monodromy theorems, relation with the fundamental group, lifting criteria
for maps to the base, properties of deck-transformations and many others. 
However, the well-known classification in terms of subgroups of the fundamental group of the base is available only for covering projections (cf. \cite[Section II.5]{Spanier}) 
and does not extend to the more general setting. 

Hurewicz fibrations with unique liftings of paths (which we call simply \emph{lifting projections}) represent a natural replacement for the concept of covering projections over base spaces with bad local properties (as compared with more involved generalizations like
Fox's overlays \cite{Fox}, generalized coverings in the sense of Fischer and Zastrow \cite{Fisher-Zastrow}, and another variant
introduced by Brodskiy, Dydak, Labuz and Mitra \cite{BDLM} that work well in the context of locally path-connected spaces). Fibrations with unique path liftings are implicitly used in Cannon and Conner \cite[Section 6]{Cannon-Conner} in the form of the action of the topological 'big' free group on its 'big' Cayley graph. However, greater
generality can lead to certain pathologies that we presented in 
\cite{Conner-Herfort-Pavesic} to explain some of the difficulties that one must face in order 
to build a reasonable classification theory of lifting projections.


\section{Lifting projections}

In this section we  recall the definition and briefly summarize some of the main properties of lifting projections. 
As we already mentioned, lifting projections were introduced by Spanier \cite{Spanier}, who called them (Hurewicz) fibrations with unique 
path-lifting property. 
In fact, Spanier developed most of the theory of covering spaces in this more general setting. For our purposes Hurewicz fibrations 
are most conveniently described in terms of lifting functions. 
Every map $p\colon E\to X$ induces a map $\overline p\colon E^I\to  E\times X^I$, $\overline p\colon \alpha\mapsto (\alpha(0),p\circ\alpha)$. 
In general $\overline p$ is not surjective, in fact its image is the subspace 
$$E\sqcap X^I:=\{ (e,\gamma)\in E\times X^I \mid p(e)=\alpha(0)\} \subset E\times X^I.$$ 
A \emph{lifting function} for $p$ is a section of $\overline p$, that is, a map $\Gamma\colon E\sqcap X^I\to E^I$ such that 
$\overline p\circ\Gamma$ is the identity map on $E\sqcap X^I$. Then we have the following basic characterization
(cf. \cite[Theorem 1.1]{Pavesic-Piccinini}): a map $p\colon E\to X$ is a \emph{Hurewicz fibration} if and only if it admits a continuous 
lifting function $\Gamma$. Furthermore, the unique path-lifting property of $p$ means then for every $\gamma\in E^I$ we have, $\Gamma(\gamma(0),p\circ\gamma)=\gamma$,
therefore $\Gamma$ is surjective and hence $\overline p$ is injective.

Thus we give the following definition:
a map $p\colon E\to X$ is a \emph{lifting projection} if $\overline p\colon E^I\to E\times X^I$ is an embedding. 
A \emph{lifting space} is a triple $(E,p,X)$ where $X$ is the \emph{base}, $E$ is the \emph{total space} and 
$p\colon E\to X$ is a lifting projection. We will occasionally abuse the terminology and refer to the space $E$ or the map $p$ itself as a lifting space over $X$.

For every $x\in X$ the subspace $p^{-1}(x)\subset E$ is called the \emph{fibre} of $p$ over $x$. If $X$ is path-connected, then all fibres
of $p$ are homeomorphic, so we will usually talk about \emph{the} fibre of $p$. A lifting projection is said to be \emph{compact} if 
its fibres are compact.

Clearly, every covering projection is a lifting projection. In fact, we may rephrase the definition of covering projections as locally trivial 
(i.e. evenly covered) lifting projections whose fibres are discrete. 

Conversely, if $p\colon E\to X$ is a lifting projection, where $X$ is locally path-connected and semi-locally simply connected, and $E$ is locally path-connected, 
then $p$ is a covering projection (\cite[Theorem II.4.10]{Spanier}). 
We obtain new examples of lifting projections as soon as we consider base spaces that are not semi-locally simply-connected (e.g., the projection 
$p\colon \RR^\infty\to (S^1)^\infty$) or total spaces that are not locally path-connected (e.g., the projection of the dyadic solenoid
onto the circle). 

One of the main advantages of lifting projections with respect to coverings is that they are preserved by arbitrary compositions, products and 
inverse limits (\cite[Theorem II.2.6 and II.2.7]{Spanier}). 

A Hurewicz fibration is a lifting projection if, and only if its fibres are totally path-disconnected (\cite[Theorem II.2.5]{Spanier}). 
Then the exact homotopy sequence of a fibration for a (based) lifting space $p\colon (E,e_0)\to (X,x_0)$ with fibre $F=p^{-1}(x_0)$ yields 
the exact sequence of groups and pointed sets
$$ 1\rightarrow\pi_1(E,e_0)\stackrel{p_\sharp}{\rightarrow} \pi_1(X,x_0)\rightarrow \pi_0(F)\rightarrow \pi_0(E)\rightarrow\ast$$
and the isomorphisms $\pi_n(E,e_0)\cong\pi_n(X,x_0)$ for $n\ge 2$. In particular we see that to every lifting projection over $X$
there corresponds a subgroup $\mathrm{Im}p_\sharp\le\pi_1(X,x_0)$, and this correspondence is fundamental to the classification of covering projections.

Another important property of covering projections that extends to lifting projections is the \emph{lifting criterion}: 
if $p\colon (E,e_0)\to (X,x_0)$ is a lifting projection, then a map $f\colon (Y,y_0)\to (X,x_0)$ from a connected and locally 
path-connected space $Y$ can be lifted to a map $\tilde f\colon (Y,y_0)\to (E,e_0)$ if, and only if 
$f_\sharp(\pi_1(Y,y_0))\le p_\sharp(\pi_1(E,e_0))$ (\cite[Theorem II.4.5]{Spanier}). Note however the critical assumption 
that the domain $Y$ is locally path-connected. In fact, the lack of this property in total spaces of lifting projections turns out 
to be the main obstruction for the extension of the classification theory of covering projections to general lifting projections. 

In \cite[Section II.5]{Spanier} Spanier describes the classification theory of covering projections. He first notes that 
the set of all covering projections over $X$ together with fibre-preserving maps between them forms a lattice.
Then he associates to every open cover $\mathcal{U}$ of $X$ a normal subgroup $\pi_1(X,\mathcal{U})\le\pi_1(X,x_0)$ defined as the normal closure
of the group of all $\mathcal{U}$-small loops. Spanier proves that if $X$ is a connected and locally path-connected space, then the correspondence 
$p\mapsto \mathrm{Im}p_\sharp$ determines a bijection between the lattice of all (pointed) covering projections over $X$ and the lattice of subgroups 
of $\pi_1(X,x_0)$ that contain $\pi_1(X,\mathcal{U})$ for some $\mathcal{U}$. 
In particular, if $\pi_1(X,\mathcal{U})=0$ for some $\mathcal{U}$ (such $X$ is said to be \emph{semi-locally simply-connected})
then the lattice of all covering projections over $X$ corresponds exactly to the lattice of all subgroups of $\pi_1(X,x_0)$. 

Very little of the classification theory remains valid for general lifting projections. One can define the universal lifting projection $p\colon\widehat X\to X$ as 
the initial object in the category of connected lifting spaces over $X$ but its total space $\widehat X$  is in general not simply-connected. In fact, one can
show that its fundamental 
group equals the intersection of all subgroups of $\mathcal{U}$-small loops (\cite[Theorem 3.5]{Conner-Pavesic}). Furthermore, the correspondence
$p\mapsto \mathrm{Im}p_\sharp$ is far from being injective as shown by the $p$-adic solenoids (inverse limits of $p$-fold coverings over the circle) 
which form an infinite family of simply-connected and yet non-isomorphic lifting projections over the circle.


\section{Fibres of regular compact lifting projections}

In the theory of covering spaces a role of particular importance is played by regular coverings. The most commonly used definition is that a covering projection
$p\colon\widetilde X\to X$ is regular if for every loop $\alpha$ in $X$ either every lifting of $\alpha$ to $\widetilde X$ is a loop or none 
of the liftings are loops. Spanier \cite{Spanier} uses the same formulation to define regularity for arbitrary lifting projections, and then proves that 
if $E$ is path-connected, then the lifting projection $p\colon E\to X$ is regular if and only if $p_\sharp(\pi_1(E))$ is a normal subgroup
of $\pi_1(X)$ (\cite[Theorem 2.3.12]{Spanier}). Furthermore, if $E$ is path-connected and locally path-connected then $p$ is regular if and only 
if the group of deck transformations $A(p)$ acts transitively on the fibres of $p$ (\cite[Corollary 2.6.3]{Spanier}). In general, transitive action
of $A(p)$ on the fibres of $p$ implies regularity, which in turn implies that the image of $p_\sharp$ is a normal subgroup of $\pi_1(X)$, but without additional assumptions
none of the implications can be reversed. Since in our considerations the transitive action of deck transformation play a much more important role than
the lifting of loops, we will depart from Spanier's terminology, and will define a lifting projection $p\colon E\to X$ to be \emph{regular} if $A(p)$ acts
transitively on the fibres of $p$.

A natural source of regular lifting projections are inverse limits of regular coverings. Let
$$\xymatrix{
\widetilde{X}_1  \ar[d]_{p_1} & \widetilde{X}_2 \ar[l] \ar[dl]_{p_2} & \widetilde{X}_3 \ar[l] \ar[dll]_{p_3} & \cdots\ar[l] & 
\widetilde{X}=\varprojlim X_i \ar[l] \ar[dllll]^{p}\\
X}$$
be an inverse sequence of regular covering spaces over $X$. Then the fibre $F$ of the lifting space $p\colon\widetilde X\to X$  
is homeomorphic to the inverse limit of groups $\varprojlim \pi_1(X)/\mathrm{Im}(p_i)_\sharp$ (\cite[Proposition 4.2]{Conner-Pavesic}), all leafs (path-components)
of $\widetilde X$ are dense in $\widetilde X$ (\cite[Proposition 4.7]{Conner-Pavesic}), and the  group of deck-transformations $A(p)$ acts freely and transitively on 
$F$ (\cite[Corollary 4.11]{Conner-Pavesic}). 

Is every regular lifting space with dense leaves isomorphic to an inverse limit of coverings? Not quite, but we can prove the remarkable fact that if the fibres are compact, then
they are actually inverse limits of finite groups.  
Toward the proof, let $p\colon E\to X$ be a regular lifting projection, and assume that at least one leaf (i.e path-component) of $E$ is dense in $E$. 
Fix base-points $x_0\in X$ and $\widetilde x_0\in F\cap L$ where $F=p^{-1}(x_0)$ is a fibre of $p$ and $L$ is a dense leaf in $E$. 
Then a deck transformation $\varphi\in A(p)$ is uniquely determined by its restriction $\varphi|_L$, which is in turn uniquely determined by the value 
$\varphi(\widetilde x_0)\in F$. By regularity, the correspondence $\varphi\mapsto \varphi(\widetilde x_0)$ defines a bijection $\Theta\colon A(p)\to F$. We may thus think of $A(p)$ as  
a group endowed with the subspace topology of $F$  in $E$ or alternatively, we may consider $F$ to be a topological space with a group structure induced by the bijection
$\Theta$. One is clearly interested when the two structures are compatible.

\begin{theorem}
\label{thm:fibres}
Assume that $p\colon E\to X$ is a regular compact lifting projection with a dense leaf. Then the fibre $F$ of $p$, viewed  as a subspace of $E$ and with 
the group structure determined by the bijection $\Theta\colon A(p)\to F$, is a compact, totally disconnected topological group.
\end{theorem}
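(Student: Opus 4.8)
\emph{Strategy.} The plan is to transport the structure of the deck group $A(p)$ — which acts on $E$ by homeomorphisms — to $F$ along the bijection $\Theta$, and then to promote the resulting partial continuity to a genuine compact topological group structure, using compactness of $F$ and the lifting function of $p$. First I would record the consequences of having a dense leaf. Every leaf of $E$ meets $F$ (connect its $p$-image to $x_0$ by a path and lift), so by regularity every leaf has the form $\varphi(L)$ with $\varphi\in A(p)$ and hence, being a homeomorphic image of $L$, is itself dense. Consequently $A(p)$ acts freely on $E$: a deck transformation fixing a point fixes, by unique path lifting, the whole (dense) leaf through it, hence all of $E$. Thus, writing $\varphi_a$ for the unique deck transformation with $\varphi_a(\widetilde x_0)=a$, the left translations $L_a=\varphi_a$ are homeomorphisms of $F$ (indeed of $E$), so the topology on $F$ is left-invariant; moreover $F$ is compact by hypothesis, Hausdorff as a subspace of $E$, and totally path-disconnected by \cite[Theorem II.2.5]{Spanier}.

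Next I would bring in monodromy. Since $p$ is a Hurewicz fibration it admits a continuous lifting function $\Gamma$, and for each loop $\gamma$ at $x_0$ the map $\tau_\gamma\colon e\mapsto\Gamma(e,\gamma)(1)$ is a homeomorphism of $F$ depending only on the class of $\gamma$ (continuity from that of $\Gamma$ and evaluation; invertibility and well-definedness from the monodromy theorem). Because deck transformations carry lifts of $\gamma$ to lifts of $\gamma$, $\tau_\gamma$ commutes with every $\varphi\in A(p)$; evaluating at $\widetilde x_0$ then gives $\tau_\gamma(a)=\varphi_a(\tau_\gamma(\widetilde x_0))=a*m_\gamma$, i.e. $\tau_\gamma$ is the right translation $R_{m_\gamma}$ by $m_\gamma:=\tau_\gamma(\widetilde x_0)$. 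Hence $\gamma\mapsto m_\gamma$ induces a homomorphism $\pi_1(X,x_0)\to F$ whose image is $M=L\cap F$, a subgroup of $F$, and right translations by elements of $M$ are homeomorphisms of $F$. I would then prove $M$ is dense in $F$: given $e\in F$ choose $\ell_i\in L$ with $\ell_i\to e$ and paths $\sigma_i$ at $x_0$ with $\Gamma(\widetilde x_0,\sigma_i)(1)=\ell_i$; since $\sigma_i(1)=p(\ell_i)\to x_0$, one closes the $\sigma_i$ into loops $\gamma_i$ by appending returning paths tending to the constant loop, and continuity of $\Gamma$ forces $m_{\gamma_i}=\Gamma(\widetilde x_0,\gamma_i)(1)\to e$.

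The hard part will be the passage from "left translations continuous everywhere, right translations continuous along the dense subgroup $M$" to joint continuity of multiplication on $F\times F$. I would do this by a compactness argument whose essential ingredient is the \emph{equicontinuity} of the family $\{\varphi_g\mid g\in A(p)\}$ of homeomorphisms of the compact Hausdorff space $F$ — equivalently, that the diagonal $\Delta_F$ has a neighbourhood basis of sets invariant under the diagonal $A(p)$-action, a statement about the difference map $F\times F\to F$, $(a,b)\mapsto a^{-1}*b$. Granting equicontinuity, Arzelà–Ascoli makes the closure $\overline{A(p)}$ of $A(p)$ in $C(F,F)$ (uniform topology) a compact group of homeomorphisms, hence a topological group; its orbit map $\psi\mapsto\psi(\widetilde x_0)$ is a continuous bijection onto $F$ — injective because a uniform limit of $\varphi_{g_i}$ fixing $\widetilde x_0$ fixes the dense set $M$ pointwise (using continuity of the $R_m$) — hence a homeomorphism; since $\Theta$ is already a bijection onto $F$, the closure must coincide with $A(p)$ itself, so $A(p)\cong F$ carries a compact topological group structure, and one checks directly that it is the one transported by $\Theta$. (Alternatively, once all right translations are shown continuous, $F$ is a compact Hausdorff semitopological group and Ellis's theorem finishes it.) I expect the verification that the deck action does not "spread out" near the base point — i.e. the equicontinuity — to be the genuinely delicate point, and the place where compactness of the fibre is indispensable.

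Finally, total disconnectedness comes for free at the end: $F$ is now a compact Hausdorff topological group that is also totally path-disconnected, so its identity component — a compact connected group and a subspace of a totally path-disconnected space — must be trivial, since a nontrivial compact connected group contains a nontrivial one-parameter subgroup and therefore nontrivial paths. Hence $F$ is a compact, totally disconnected topological group, as asserted.
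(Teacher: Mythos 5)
Your setup is sound and large parts of it run parallel to the paper's argument: continuity of left translations via the fact that each deck transformation is a continuous self-map of $E$, continuity of right translation by elements of $M=L\cap F$ via the lifting function $\Gamma$ along a fixed path in the dense leaf, and the concluding step that a compact connected subgroup of a totally path-disconnected group must be trivial because it would contain a nontrivial one-parameter subgroup (the paper phrases this through monothetic subgroups and the universal monothetic group $\mathbb S$, but it is the same idea). The appeal to Ellis's theorem is also exactly the paper's closing move for the topological-group structure.

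The genuine gap is precisely the step you flag as ``the genuinely delicate point'' and then do not carry out. Your main route rests on equicontinuity of the family $\{\varphi_g\mid g\in A(p)\}$ on $F$, which you never prove; for a compact group, equicontinuity of the translation action is essentially equivalent to the continuity statement you are trying to establish, so deferring it leaves the core of the theorem unproved. Your fallback route (``once all right translations are shown continuous, Ellis finishes it'') is the paper's actual strategy, but you only establish continuity of $R_m$ for $m$ in the dense subgroup $M=L\cap F$, and Ellis's theorem needs separate continuity for \emph{all} right translations. The missing bridge is exactly what the paper supplies: for $\psi$ with $\psi(\widetilde x_0)\notin L$, approximate by $\psi_j$ with $\psi_j(\widetilde x_0)\in L$ converging to $\psi(\widetilde x_0)$, and then justify the interchange of limits
$$\lim_i\lim_j \varphi_i(\psi_j(\widetilde x_0))=\lim_j\lim_i \varphi_i(\psi_j(\widetilde x_0))$$
in the compact metric space $F$, using the already-proved continuity of left multiplication and of right multiplication by elements of $L$. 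Without either a proof of equicontinuity or this limit-interchange argument, the passage from ``right continuity on a dense subgroup'' to ``right continuity everywhere'' is unsupported, and that is where the compactness of the fibre actually does its work. Everything before and after that point in your proposal is fine.
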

\begin{proof} We are going to show that the group operation in $A(p)$ is left and right continuous with respect 
to the topology on $A(p)$ induced by the bijection $\Theta$. 

Fix some $\varphi\in A(p)$. If $\psi$ and $\psi'$ are close in $A(p)$ (which by definition means that $\psi(\widetilde x_0)$ and $\psi'(\widetilde x_0)$ are close in $F$),
then the continuity of $\varphi$ implies that $\varphi(\psi(\widetilde x_0))$ is close $\varphi(\psi'(\widetilde x_0))$. Therefore $\varphi\circ\psi$ is close
to $\varphi\circ\psi'$, and so the left multiplication in $A(p)$ is continuous.

Next, fix some $\psi\in A(p)$ and consider a sequence $\varphi_1,\varphi_2,\ldots $ in $A(p)$, converging to $\varphi$ (that is to say, the sequence
$\varphi_1(\widetilde x_0),\varphi_2(\widetilde x_0),\ldots$ converges to $\varphi(\widetilde x_0)$ in $F$). 
If $\psi(\widetilde x_0)\in L$ then there is a path $\widetilde\alpha\colon (I,0,1)\to (L,\widetilde x_0,\psi(\widetilde x_0))$ and the deck transformation $\varphi_i\circ\psi$ 
is determined by the relation 
$$\varphi_i(\psi(\widetilde x_0))=\Gamma(\varphi_i(\widetilde x_0),p\widetilde\alpha)(1).$$ 
Since the lifting is along the fixed path $p\widetilde\alpha$ and since the lifting function $\Gamma$ is continuous, it follows that 
the sequence $\Gamma(\varphi_i(\widetilde x_0),p\widetilde\alpha)(1)$ converges to $\Gamma(\varphi(\widetilde x_0),p\widetilde\alpha)(1)$ in $F$, therefore
the sequence $\varphi_i\circ\psi$ converges to $\varphi\circ\psi$ in $A(p)$. 

If $\psi(\widetilde x_0)\notin L$ then we may choose a sequence of elements $\psi_1,\psi_2,\ldots$ in $A(p)$ such that $\psi_1(\widetilde x_0),\psi_2(\widetilde x_0),\ldots$ are in the 
dense leaf $L$ and converge to $\psi(\tilde x_0)$. Then we can combine the continuity of the left-multiplication in $A(p)$ with the continuity of the right-multiplication 
by deck transformations that correspond to elements in $L$ to perform the following computation in the compact metric space $F$: 
$$\lim_i \varphi_i(\psi(\tilde x_0))=\lim_i \lim_j \varphi_i(\psi_j(\tilde x_0))=\lim_j \lim_i \varphi_i(\psi_j(\tilde x_0))=\lim_j \varphi_i(\psi(\tilde x_0))=
\varphi(\psi(\tilde x_0)).$$
We conclude that the multiplication in $A(p)$ is right-continuous as well. 
By a theorem of R. Ellis \cite{Ellis57} a left and right-continuous compact group is actually a topological group.

We may now use the topological group structure to show that $F$ is totally disconnected. It clearly suffices to consider the unit component of $G$, so
we may assume without loss of generality that $G$ is connected, and show that $G=1$. 
By \cite[9.36(ix)]{hofmor} every element $x\in G$ belongs to a compact connected monothetic subgroup, say $T$, of $G$. 
If we can prove that $T=1$ we are done. Now, $T$ is connected and monothetic and hence is an epimorphic image of the
universal monothetic group $\mathbb S$. For the latter (and actually for any connected compact monothetic group) it
is known that every element belongs to a one-parameter subgroup, that is, it is path connected. Therefore our $T$ is path connected and trivial, as wanted.
\end{proof}

By a famous theorem on topological groups every compact, Hausdorff and totally disconnected topological group is a \emph{profinite} topological group, i.e., 
it can be obtained as an inverse limit of a sequence of finite groups, see \cite[Theorem 1.34]{hofmor}. Thus, we get the following 

\begin{corollary}
\label{cor:profinite fibres}
If $p\colon E\to X$ is a regular compact lifting projection with a dense leaf, then its fibres are profinite topological groups.
\end{corollary}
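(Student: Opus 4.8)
The plan is to read the corollary off Theorem~\ref{thm:fibres} together with the classical structure theorem for compact totally disconnected groups; the argument is essentially a single citation.

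First I would apply Theorem~\ref{thm:fibres}: under the present hypotheses --- $p$ regular and compact with a dense leaf --- the fibre $F$, equipped with the group operation transported from $A(p)$ along the bijection $\Theta$, is a compact and totally disconnected topological group. It is also Hausdorff, since $F$ sits as a subspace of the total space $E$ and, as already used in the proof of Theorem~\ref{thm:fibres}, is in fact a compact metric space. So $F$ is a compact, Hausdorff, totally disconnected topological group.

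Second I would invoke the classical structure theorem \cite[Theorem~1.34]{hofmor}: in a compact Hausdorff totally disconnected group the open normal subgroups $N$ form a neighbourhood basis of the identity, each quotient $F/N$ is finite by compactness, and the canonical homomorphism $F\to\varprojlim_N F/N$ is a continuous bijection between compact Hausdorff spaces, hence a topological isomorphism; thus $F$ is profinite. Because $X$ is path-connected all fibres of $p$ are homeomorphic, and the group structure transports along path-lifting, so every fibre of $p$ is a profinite topological group. I anticipate no genuine obstacle here: the corollary merely repackages Theorem~\ref{thm:fibres}, and the only point needing attention --- that the cited theorem assumes the Hausdorff property --- is settled as above.
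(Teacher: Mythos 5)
Your proposal matches the paper's argument exactly: the corollary is deduced by combining Theorem~\ref{thm:fibres} with the classical fact (\cite[Theorem~1.34]{hofmor}) that a compact, Hausdorff, totally disconnected topological group is profinite. Your extra remark that the Hausdorff hypothesis is satisfied because $F$ is a (compact metric) subspace of $E$ is a correct and harmless elaboration of the same one-line citation the paper gives.
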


One reason for the importance of the above result is that it rules out the possibility that $F$ is a totally path-disconnected but 
not totally disconnected continuum (for example a pseudo-arc or pseudo-circle). 
In fact, since we assumed that $F$ is a metric space, 
we may conclude that $F$, as a topological space, is homeomorphic to the Cantor set. 

Even when we know that the fibres of a lifting projection over $X$ are inverse limits of finite groups, we cannot conclude that the lifting 
projection can be obtained as an inverse limit of finite regular coverings, simply because the fibre need not be
an inverse limit of finite quotients of $\pi_1(X)$. It is clear that the topology of the fibre must be in some sense compatible with 
the family of finite quotients of the fundamental group of the base.

Let us say that a lifting projection $p\colon \widetilde X\to X$ is \emph{profinite} if it can be obtained as the inverse limit of a system 
of regular, finite-sheeted covering projections $\{p_i\colon \widetilde X_i\to X\}$. If we assume in addition that 
the intersection $\bigcap_i \mathrm{Im}(p_i)_\sharp$ is trivial, then the canonical map $\pi_1(X)\to\varprojlim \pi_1(X)/\mathrm{Im}(p_i)_\sharp$ 
is injective and can be viewed as the profinite completion of $\pi_1(X)$ with respect to the family of subgroups $\{\mathrm{Im}(p_i)_\sharp\}$. 
If $\bigcap_i \mathrm{Im}(p_i)_\sharp$ is not trivial, then the canonical map is not injective but is still continuous if we consider
$\pi_1(X)$ as a topological space with the profinite topology (the group topology generated by the family of all normal subgroups of finite index).
This observation will be used in the following section to formulate a  suitable compatibility condition between  the topology of the fibre and 
the fundamental group of the base.

\section{Two actions}

In the proof of Theorem \ref{thm:fibres} we used separate arguments to prove the continuity of the left and of the 
right multiplication in $A(p)$. This is a reflection of the  fact that there are actually two fundamentally different ways 
in which $\pi_1(X,x_0)$ acts on the fibre $F:=p^{-1}(x_0)$ of a lifting space $p\colon E\to X$.
In this section we study the properties of these actions and the relations between them. 

Given a path  $\alpha\colon (I,0,1)\mapsto (X,x_0,x_1)$ and a point $\tilde x\in p^{-1}(x_0)$, we may use the lifting function for the lifting space $p\colon E\to X$ to 
define an operation of $\alpha$ on $E$ as
$$\tilde x\alpha:=\Gamma(\tilde x,\alpha)(1)\in p^{-1}(x_1).$$ 
By the monodromy theorem the value $\tilde x\alpha$ depends only on the homotopy class (rel $\partial I$) of $\alpha$. 
Moreover, for a fixed $\alpha$ the map $\tilde x\mapsto \tilde x\alpha$ is continuous, and when $\alpha$ and $\beta$ can be 
concatenated we have the associativity relation
$$(\tilde x\alpha)\beta=\tilde x(\alpha\cdot\beta).$$  
The case $x_0=x_1$ is of special interest, because then we obtain a \emph{right action} of $\pi_1(X,x_0)$ on $F$.

While the definition of the right action is completely general as it depends only on the fibration property of lifting spaces, 
in order to define a left action of $\pi_1(X)$ on $F$ the lifting space must 
satisfy certain additional assumptions. Let $L$ be a leaf of $E$ and let  $\widetilde x_0\in F\cap L$.
Assume that the group of deck transformations $A(p)$ acts transitively on $F\cap L$, and that for any $\varphi,\psi\in A(p)$ the condition 
$\varphi(\tilde x_0)=\psi(\tilde x_0)$ 
implies $\varphi=\psi$  (this is the case, for example, if $L$ is dense $E$). Then for every $\alpha\in\pi_1(X,x_0)$ there exists a unique 
$\varphi_\alpha\in A(p)$ such that 
$\varphi_\alpha(\tilde x_0)=\tilde x_0\alpha$. The value of $\varphi_\alpha$ on an element $\tilde x\in L$ can be expressed in terms of the right action:
$\varphi_\alpha(\tilde x)=(\tilde x_0 \alpha) \gamma$, where $\gamma$ is any path in $X$ such that $\tilde x_0\gamma=\tilde x$. Thus we may introduce the notation 
$$\alpha\tilde x:=\varphi_\alpha(\tilde x).$$ 
Moreover, we have $\beta\tilde x=(\tilde x_0 \beta)\gamma=\tilde x_0(\beta\cdot\gamma)$, and hence 
$$\alpha(\beta \tilde x)=(\tilde x_0 \alpha)(\beta\cdot\gamma)=(\tilde x_0(\alpha\cdot\beta))\gamma)=(\alpha\cdot\beta)\tilde x.$$
We conclude that under the required assumptions we obtain a \emph{left action} of $\pi_1(X,x_0)$ on $F$. 
Furthermore, for $\alpha,\beta\in\pi_1(X,x_0)$ and $\tilde x\in F$ we have $\tilde x\beta=\tilde x_0(\gamma\cdot\beta)$, which yields
the associativity relation
$$\alpha(\tilde x\beta)=(\tilde x_0\alpha)(\gamma\cdot\beta)=((\tilde x_0)\alpha)\gamma)\beta=(\alpha\tilde x)\beta.$$

To  clarify the difference between the two actions, let us consider the infinite 4-valent tree (figure \ref{fig:tree}) viewed as the universal covering of the wedge of two circles
$S^1\vee S^1$.
The fibre over the base-point of the wedge is represented by the crossings in the tree, while the fundamental group of the wedge is isomorphic to $F_2$, the free group on two generators. 
Let $\alpha\in\pi_1(S^1\vee S^1,x_0)$ be the generator represented by the projection of the path $\widetilde\alpha$ from $\tilde x_0$ to $\tilde x_1$ in the tree. 
\begin{figure}[ht]
    \centering
    \includegraphics[scale=0.4]{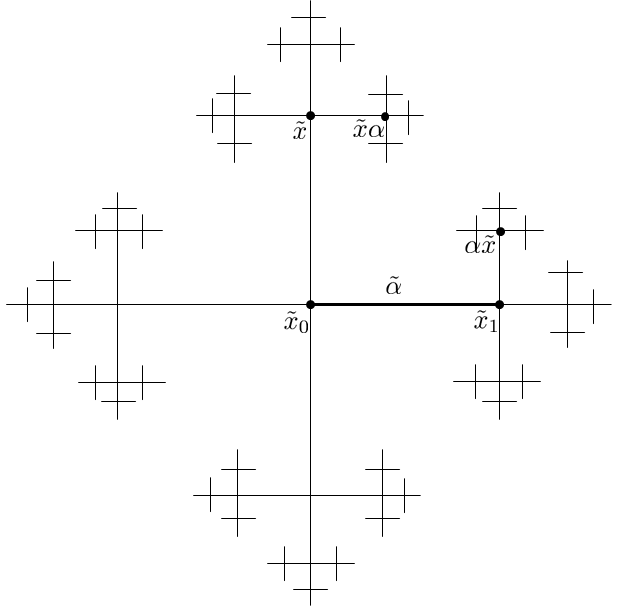}
    \caption{Left and right action.}
    \label{fig:tree}
\end{figure}
Then the right action of $\alpha$ on the fibre simply shifts each node in the tree by one segment to the left. The left action is slightly harder to visualize because it is given by 
the unique deck transformation of the tree that maps $\tilde x_0$ to $\tilde x_1$. Observe that the points in the fibre on which the left and the right action of $\alpha$ coincide
are exactly those that lie on the horizontal line through $\tilde x_0$ and $\tilde x_1$.

Both actions play an important role in the theory of lifting spaces. We give a list of their properties, that are closely related and in some sense complementary. 
\begin{enumerate}
\item In general, neither of the actions is faithful. In fact, for $\alpha\in p_\sharp(\pi_1(E,\tilde x_0))$ we have $\tilde x\alpha=\tilde x$ for all $\tilde x\in F$,
and $\alpha\tilde x=\tilde x$ for all $\tilde x\in F\cap L$. In other words, both actions can be reduced to faithful actions of $\pi_1(X,x_0)/p_\sharp(\pi_1(E,\tilde x_0))$
on $F$.
\item The definition of the right action is intrinsic, while the definition of the left action heavily depends on the choice of the base-point $\tilde x_0$. 
Both actions agree on $\tilde x_0$, as $\alpha\tilde x_0=\tilde x_0\alpha$ for all $\alpha$, and normally disagree on other points in $F$. However, if $\alpha$ is in
the center of $\pi_1(X,x_0)$ then we have 
$$\alpha\tilde x=\tilde x_0 (\alpha\cdot\gamma)=\tilde x_0 (\gamma\cdot\alpha)=(\tilde x_0\gamma)\alpha=\tilde x\alpha.$$
In particular, left and right action agree if the fundamental group of the base is abelian.
\item 
The left action, whenever defined, is actually a restriction of some deck transformation, which is a continuous self-map of the entire total space. On the other hand,
the right action on the fibre, cannot be extended to a continuous self-map of  $E$, unless of course, it coincides with the left action. Nevertheless, when
the left action is defined, we may use it to determine the right action by the formula
$$\tilde x\alpha:=(\gamma\cdot\alpha)\tilde x_0, \ \ \text{where} \  \ \gamma\tilde x_0=\tilde x.$$
\item
The action of the group of deck transformations $A(p)$ on $E$  naturally restricts to a left action of $A(p)$ on the fibre $F$. Assume that the action on $F$ is 
transitive, and that $E$ has a dense leaf $L$. By choosing $\tilde x_0\in F\cap L$ we may define a left action of $\pi_1(X,x_0)$ on $F$.
This action is compatible with the action of $A(p)$ on $F$, with respect to the homomorphism $\alpha\mapsto\varphi_\alpha$ which is uniquely determined by 
the relation $\alpha\tilde x_0=\varphi_\alpha(\tilde x_0)$ as previously discussed. In fact, under the above assumptions, there is a bijection 
$\Theta\colon A(p)\to F$. It is easy to verify that with respect to this identification the left and right actions of $\pi_1(X,x_0)$ on $F$ exactly correspond 
to the left and right regular actions of $A(p)$ on itself.
\end{enumerate}

The right action of $\pi_1(X,x_0)$ on $F$ determines a map 
$$\theta\colon\pi_1(X,x_0)\to F,\;\;\;\;\; \alpha\mapsto x_0\alpha.$$
We will say that a compact lifting space $p\colon E\to X$ is \emph{$\pi_1$-profinite} if  $\theta$ is continuous with respect to the 
profinite topology on the fundamental group. Clearly, every profinite lifting projection is also $\pi_1$-profinite.

\section{Characterization of profinite lifting projections}

As we explained above, every profinite lifting projection over $X$ is regular, $\pi_1$-profinite and with dense leaves. We are now ready to prove 
a partial converse to this result. In the process we will need to assume that the base space is locally nice, 
which is not surprising in view of the vast variety of anomalous examples of lifting spaces (cf. in particular 
Example 2.7 in \cite{Conner-Herfort-Pavesic}). 

Let $X$ be a connected and locally path-connected space with a residually finite fundamental group. Denote by 
$(\widehat X,\widehat p,X)$ the inverse limit of all regular, finite-sheeted coverings of $X$. 
The leaves of $\widehat X$ are simply-connected and dense in $\widehat X$, and the fibre of $\widehat p$ can be naturally 
identified with the group of deck transformations $A(\widehat p)$,
which can in turn be identified with the profinite completion of $\pi_1(X)$. 
Fix base-points $x_0\in X$ and $\widehat x_0\in \bar p^{-1}(x_0)$, and denote by  $L$ the leaf 
of $\widehat X$ containing $\widehat x_0$. Then the restriction $q\colon L\to X$ is a lifting space, whose 
fibre $q^{-1}(x_0)=\widehat p^{-1}(x_0)\cap L$ 
can be identified with $\pi_1(X,x_0)$ through the action map $\alpha\mapsto \bar x_0\alpha $. On the other hand, we may also
identify $\pi_1(X,x_0)$ with the group of
deck transformations $A(q)\le A(\widehat p)$, and view $L$ as a left $\pi_1(X,x_0)$-space. Observe that the lifting space $q\colon L\to X$ is 
completely determined by $X$, 
and that it is actually a variant of Spanier's construction of the universal lifting space (cf. \cite[Section II.5]{Spanier}).

Given an arbitrary lifting projection $p\colon E\to X$, its fibre $F:=p^{-1}(x_0)$ is a right $\pi_1(X,x_0)$-space and we can define a map 
$$\Phi\colon F\times L\to E$$ 
as follows. For every $\widehat x\in L$ choose a path $\alpha\colon (I,0)\to (X,x_0)$ such that $\widehat x_0\alpha=\widehat x$ 
and let 
$$\Phi(u,\widehat x):=u\alpha.$$ 
Since $L$ is simply connected, if $\alpha'$ is another path satisfying $\widehat x_0\alpha'=\widehat x$, then 
$\alpha\simeq\alpha'(\mathrm{rel}\,\partial I)$, and so $u\alpha=u\alpha'$.
Moreover, $\Phi$ is clearly surjective, because every point in $E$ is connected by a path to some point in $F$.
On the other hand, $\Phi$ is not injective. In fact, if $\Phi(u,\widehat x)=\Phi(v,\widehat y)$, then one can find paths 
$\alpha,\beta\colon (I,0)\to (X,x_0)$ such that $u\alpha=v\beta$, therefore
$u(\alpha\beta^{-1})=v$. Furthermore, since the left and the right action agree on the base point, we also have 
$\alpha\widehat x_0=\widehat x$ and $\beta\widehat x_0=\widehat y$, 
hence $\widehat x=(\alpha\beta^{-1})\widehat y$. As a consequence  $\Phi(u,\widehat x)=\Phi(v,\widehat y)$ if, and only if there exists 
an element $\gamma\in\pi_1(X,x_0)$ such that 
$u\gamma=v$ and $\widehat x=\gamma\widehat y$.
Let us define an equivalence relation on $F\times L$ by requiring that $(u\gamma,\widehat y)\sim (u,\gamma\widehat y)$ for 
$\gamma\in\pi_1(X,x_0)$ 
(note that the left action of $\gamma$ is defined on any element of $L$, not only on the fibre over $x_0$), 
and define the Borel construction $F\times_{\pi_1(X)}L:=F\times L/\sim$. 
Then $\Phi$ induces a fibre-preserving bijection
$$\overline\Phi\colon F\times_{\pi_1(X)}L\to E.$$

\begin{theorem}
\label{thm:profinite}
Let $X$ be a path-connected and locally categorical space with a residually finite fundamental group.
If $p\colon E\to X$ is a regular, $\pi_1$-profinite lifting projection with dense leaves, then the map 
$$\overline\Phi\colon F\times_{\pi_1(X)}L\to E $$ 
is a homeomorphism.
\end{theorem}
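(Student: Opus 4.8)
The plan is to show that the continuous bijection $\overline\Phi\colon F\times_{\pi_1(X)}L\to E$ is in fact a homeomorphism by exploiting compactness on the source side. First I would observe that $\overline\Phi$ is automatically continuous: the map $\Phi(u,\widehat x)=u\alpha=\Gamma(u,\alpha)(1)$ is built out of the (continuous) lifting function $\Gamma$ and, although the choice of $\alpha$ is not continuous in $\widehat x$, local path-connectedness of $X$ together with the simple-connectivity of the leaf $L$ lets one choose $\alpha$ continuously on small neighborhoods, so $\Phi$ is continuous and descends to a continuous $\overline\Phi$ on the quotient. Since $\overline\Phi$ is already known (from the discussion preceding the theorem) to be a fibre-preserving bijection, it remains to show it is a closed (equivalently open) map, and the natural route is: exhibit $F\times_{\pi_1(X)}L$ as (locally) compact and $E$ as Hausdorff, so that a continuous bijection from a compact space to a Hausdorff space is a homeomorphism.

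The key steps, in order, would be the following. \emph{Step 1: reduce to fibres.} Both $\widehat p$ and $p$ are fibrations over the same base $X$, and $\overline\Phi$ is fibre-preserving, so it suffices to prove $\overline\Phi$ is a homeomorphism on each fibre and then upgrade this using a local trivialization / lifting-function argument over path-connected neighborhoods in $X$. Over a point $x_0$ the fibre of $F\times_{\pi_1(X)}L$ is $F\times_{\pi_1(X)}(\text{fibre of }q)=F\times_{\pi_1(X)}\pi_1(X,x_0)$, which, since $\pi_1(X,x_0)$ acts on itself, collapses to $F$ itself; so on fibres $\overline\Phi$ is the identification $F\to p^{-1}(x_0)$, which is a homeomorphism by construction. \emph{Step 2: topologize the source correctly.} The issue is the quotient topology on $F\times_{\pi_1(X)}L$; here the $\pi_1$-profinite hypothesis enters. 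Give $\pi_1(X,x_0)$ its profinite topology; then by $\pi_1$-profiniteness the right action $F\times\pi_1(X)\to F$ is continuous, and the left action $\pi_1(X)\times L\to L$ extends to a continuous action of the profinite completion $\widehat{\pi_1(X)}=A(\widehat p)$ on $L$ (indeed on $\widehat X$). Thus the quotient $F\times_{\pi_1(X)}L$ can be rewritten as $F\times_{\widehat{\pi_1(X)}}(\text{something compact})$ — more precisely, the relation $\sim$ has closed graph because $F$ is compact (Corollary \ref{cor:profinite fibres}) and the actions are continuous with respect to the profinite topology. \emph{Step 3: compactness and Hausdorffness.} Conclude that $F\times_{\pi_1(X)}L$ is Hausdorff and locally compact (locally it looks like $F\times(\text{Euclidean-type chart in }L)$ modulo a free action of a finite group, coming from the finite-sheeted coverings in the inverse system), that $E$ is Hausdorff, and that $\overline\Phi$ restricted over any sufficiently small path-connected, semi-locally-simply-connected-within-$X$ neighborhood $U\ni x_0$ is a continuous bijection $F\times_{\pi_1(X)} q^{-1}(U)\to p^{-1}(U)$ between spaces each of which fibres over $U$ with compact fibre $F$; apply the compact-to-Hausdorff principle fibrewise-and-locally to get that $\overline\Phi$ is a local homeomorphism, hence a homeomorphism since it is a bijection.

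The main obstacle, as I see it, is Step 2: making precise the sense in which the Borel construction $F\times_{\pi_1(X)}L$ — formed using the \emph{discrete} group $\pi_1(X,x_0)$ — coincides with one formed using the profinite completion, so that one actually gets a compact (or locally compact) Hausdorff quotient rather than a badly-behaved one. This is exactly where the $\pi_1$-profinite hypothesis must be used in full strength: one needs that the map $\theta\colon\pi_1(X,x_0)\to F$, $\alpha\mapsto \widehat x_0\alpha$, being continuous for the profinite topology, forces the orbit relation on $F\times L$ to be closed, and that the kernel of the $\pi_1(X)$-action on $F$ together with $p_\sharp(\pi_1 E)$ is ``large enough'' (closed of the right form) that the quotient unwinds into the inverse limit of the finite Borel constructions $F\times_{\pi_1(X)/N}(\widehat X/N')$ over the finite coverings in the system. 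Handling the non-Hausdorff pitfalls of quotients by non-proper discrete group actions — and verifying that local path-connectedness of $X$ really does supply the continuous local choices of $\alpha$ needed for $\overline\Phi$ to be not merely a bijection but a local homeomorphism — is where the real work lies; the compactness endgame is then routine.
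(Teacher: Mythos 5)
Your overall architecture (prove the statement on fibres, then globalize by local triviality) matches the paper's, and your compactness endgame for the fibre could be made to work: $F\times_{\pi_1(X)}F_0$ is compact because it is the image of $F\times\{\widehat x_0\}$ under the quotient map, so a \emph{continuous} bijection onto the compact Hausdorff space $F$ would automatically be a homeomorphism (the paper instead argues that $\Phi$ is open because $\pi_1(X)$ acts by homeomorphisms). But there is a genuine gap at the start: the continuity of $\Phi$ is \emph{not} automatic, and your proposed mechanism for it fails. The leaf $L$ (and in particular $F_0=L\cap\widehat p^{-1}(x_0)$, which is totally disconnected) is not locally path-connected in the subspace topology of $\widehat X$: two points of $F_0$ that are close in $\widehat X$ are joined in $L$ only by long paths whose projections are loops that are ``deep'' in the profinite filtration of $\pi_1(X)$, so there is no continuous local choice of the path $\alpha$. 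Continuity of $(u,\widehat x)\mapsto u\alpha$ in the $\widehat x$-direction is exactly where the $\pi_1$-profinite hypothesis must be spent, and continuity of $\theta$ alone gives only continuity in the second variable at fixed $u$; to get \emph{joint} continuity (and openness) one needs the identity $u\alpha=\mu(u,\widetilde x_0\alpha)$, i.e.\ that the right action of $\pi_1(X)$ on $F$ is right translation in the topological group $F\cong A(p)$ furnished by Theorem \ref{thm:fibres}. This factorization $\Phi=\mu\circ(1\times(\theta\circ\theta_0^{-1}))$ is the crux of the paper's argument and is absent from your proposal; without it your Step 1 claim that the fibrewise map is ``a homeomorphism by construction'' is unsupported.

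Two smaller points. First, the hypothesis is that $X$ is locally \emph{categorical}, not locally path-connected; it is the categorical open sets $U$ (with a contracting homotopy into $X$) that make both $p$ and $q$ locally trivial, which is what legitimizes identifying $\overline\Phi$ over $U$ with $1\times\overline\Phi\colon U\times(F\times_{\pi_1(X)}F_0)\to U\times F$ and hence globalizing the fibrewise statement. Local path-connectedness would not give this. Second, your Step 3 description of the source as ``locally $F\times(\text{Euclidean chart})$ modulo a free finite group action'' is not accurate: the identification group is the full (generally infinite, discrete) $\pi_1(X)$, and the correct local model is the product of $U$ with the compact quotient $F\times_{\pi_1(X)}F_0$; the closed-graph and local-compactness considerations you invoke are not needed once the fibrewise homeomorphism is in hand.
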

\begin{proof}
Let $F_0=q^{-1}(x_0)=p^{-1}(x_0)\cap L$. Our first goal is to prove that the restriction 
$$\overline\Phi\colon F \times_{\pi_1(X)} F_0\to F$$ 
is a homeomorphism. Consider the following diagram, where $\pi_1(X)$ is endowed with the profinite topology, while 
$\theta_0\colon\pi_1(X)\to F_0$ 
and $\theta\colon\pi_1(X)\to F$ are defined respectively as $\theta_0(\alpha):=\widehat x_0\alpha$ and $\theta(\alpha)=\tilde x_0\alpha$.
Observe that $\theta_0$ is a homeomorphism and that $\theta$ is continuous.
Furthermore, let $\Theta\colon A(p)\to F$ be the homeomorphism from Theorem \ref{thm:fibres}, and let $\mu\colon F\times F \to F$ be the multiplication induced by the composition in $A(p)$.
$$\xymatrix{
F\times\pi_1(X) \ar[r]^-{1\times\theta_0}_-\approx \ar[d]_{1\times\theta} & F\times F_0 \ar[d]^\Phi \ar[r] & F\times_{\pi_1(X)} F_0 \ar[d]^{\overline\Phi}\\
F\times F \ar[r]^{\mu}& F \ar@{=}[r] & F \\
A(p)\times A(p)\ar[u]^{\Theta\times\Theta} \ar[r]_-\circ & A(p) \ar[u]_\Theta  }$$
We must first check that the upper left square is commutative. For $(u,\alpha)\in F\times\pi_1(X)$ we have 
$$\Phi((1\times\theta_0) (u,\alpha))=\Phi(u,\widehat x_0\alpha)=u\alpha.$$
On the other hand we can apply the formula $[\Theta^{-1}(u)](\widetilde x_0)=u$ and the associativity relation between the left and right action to compute
$$\mu((1\times\theta) (u,\alpha))=\Theta[\Theta^{-1}(u)\circ\Theta^{-1}(\widetilde x_0\alpha)]=\Theta^{-1}(u)[\Theta^{-1}(\widetilde x_0\alpha)(\widetilde x_0)]=$$
$$=\Theta^{-1}(u)(\widetilde x_0\alpha)=
(\Theta^{-1}(u)(\widetilde x_0))\alpha=u\alpha$$
The lifting projection $p$ is, by assumption, $\pi_1$-profinite, which means that the map $\theta$ is continuous, therefore the action map $F\times \pi_1(X)\to F$ is also continuous. Since $\pi_1(X)$ acts on $F$ 
by homeomorphsms, the action map is an open map. As a consequence, the map $\Phi\colon F\times F_0\to F$ is continuous and open, which in turn implies that $\overline\Phi\colon F \times_{\pi_1(X)} F_0\to F$ is 
a homeomorphism.

Recall (cf. \cite[Definition 5.12]{James}) that a space is \emph{locally categorical} if it can be covered by categorical open sets. Every lifting 
projection over a locally categorical space $X$ is locally trivial. In fact, if $U\subseteq X$ is a categorical subset with a contracting homotopy $H\colon U\times I\to U$ 
such that $H(U\times\{1\})=x_0$, then the map 
$$\xi\colon p^{-1}(U)\to U\times F,\;\;\; \xi(\widetilde u):=\big(p(\widetilde u), \Gamma(\widetilde u,H(p(\widetilde u),-)(1)\big)$$
is a fibre-preserving homeomorphism. Similarly, the restriction to $U$ of  the lifting
projection $q\colon L\to X$ is fibrewise homeomorphic to $U\times F_0$. As a consequence, we may identify 
the restriction to $U$ of the map $\overline \Phi\colon F\times_{\pi_1(X)} L\to F$ with the map 
$$1\times \overline\Phi\colon U\times (F\times_{\pi_1(X)} F_0)\to U\times F,$$
which is a homeomorphism by the discussion above.

Being a bijection and a local homeomorphism, the map $\overline\Phi\colon F\times_{\pi_1(X)} L\to E$ is  a homeomorphism.
\end{proof}

We may now prove a characterization of profinite lifting projections.

\begin{theorem}
Let $X$ be a path-connected and locally categorical space with a residually finite fundamental group.
A lifting projection $p\colon E\to X$ is profinite if, and only if it is regular, $\pi_1$-profinite and has a dense leaf.
\end{theorem}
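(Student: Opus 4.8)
The plan is to prove the characterization by assembling the pieces already established in the paper, so the argument is essentially a synthesis. The forward direction was already noted in the text: if $p$ is profinite, i.e.\ $p=\varprojlim p_i$ for a system of regular finite-sheeted coverings $p_i\colon\widetilde X_i\to X$, then by the results recalled at the start of Section~3 (based on \cite{Conner-Pavesic}) the fibre is $\varprojlim\pi_1(X)/\mathrm{Im}(p_i)_\sharp$, the leaves of $\widetilde X$ are dense, and $A(p)$ acts freely and transitively on the fibre, so $p$ is regular with a dense leaf; moreover the map $\theta\colon\pi_1(X)\to F$ is the canonical map to the inverse limit of finite quotients, hence continuous for the profinite topology, so $p$ is $\pi_1$-profinite. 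This establishes one implication with no new work.

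For the converse I would start from a regular, $\pi_1$-profinite lifting projection $p\colon E\to X$ with a dense leaf, where $X$ is path-connected, locally categorical, with $\pi_1(X)$ residually finite. First invoke Theorem~\ref{thm:profinite}: under exactly these hypotheses $\overline\Phi\colon F\times_{\pi_1(X)}L\to E$ is a homeomorphism, where $(\widehat X,\widehat p,X)$ is the inverse limit of all regular finite-sheeted coverings of $X$ and $L$ is the leaf of $\widehat X$ through a chosen base-point. By Corollary~\ref{cor:profinite fibres} the fibre $F$ is a profinite topological group, so write $F=\varprojlim F/N_j$ where $\{N_j\}$ ranges over the open (hence finite-index, closed) normal subgroups of $F$. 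The idea is then to push this inverse limit decomposition of $F$ through the Borel construction: for each $j$ set $E_j:=(F/N_j)\times_{\pi_1(X)}L$, which should be a finite-sheeted regular covering of $X$ — finite-sheeted because $F/N_j$ is finite, regular because the left $\pi_1(X)$-action descends and the group of deck transformations is the finite quotient of $\pi_1(X)$ acting as the quotient of $A(\widehat p)$, and a genuine covering because $L\to X$ is Spanier's universal lifting space and quotienting the simply-connected leaf $L$ by the finite-index subgroup $\mathrm{Stab}$ (the preimage in $\pi_1(X)$ of the stabilizer) gives a finite covering using local categoricity of $X$ exactly as in the proof of Theorem~\ref{thm:profinite}. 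One then checks $E=\varprojlim E_j$: the inverse limit of the $F/N_j$ over the directed poset of open normal subgroups recovers $F$, the Borel construction commutes with this inverse limit (it is a quotient of a product, and the product $F\times L$ is $\varprojlim (F/N_j)\times L$ with compatible $\pi_1(X)$-actions), and the homeomorphism $\overline\Phi$ identifies the resulting limit with $E$ compatibly with the projections to $X$.

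The step I expect to be the main obstacle is verifying that each $E_j\to X$ is honestly a \emph{covering} projection, and more delicately that the transition maps $E_{j'}\to E_j$ and the limiting map $\varprojlim E_j\to E$ are the correct ones — i.e.\ that the inverse limit of the system $\{E_j\}$, with its natural maps, is homeomorphic over $X$ to $E$ and not merely to some quotient or some space with the wrong topology. This requires care because the right $\pi_1(X)$-action on $F$ need not be continuous in the naive sense (only the composite $\theta$ is continuous for the profinite topology), so one must track which topology is used at each stage and ensure the Borel construction's quotient topology behaves well under the limit. I would handle this by working locally over a categorical open set $U\subseteq X$, where by the trivialization in the proof of Theorem~\ref{thm:profinite} everything becomes $U\times(\text{Borel construction})$, reducing the inverse-limit compatibility to the purely algebraic statement that $F\times_{\pi_1(X)}F_0=\varprojlim (F/N_j)\times_{\pi_1(X)}F_0$ with $F_0\cong\pi_1(X)$; there it is a routine check that the Borel construction of an inverse limit of finite discrete quotients is the inverse limit of the Borel constructions. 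Once locality reduces the problem to this algebraic identity and the finiteness of each $F/N_j$, the conclusion that $p$ is profinite follows by gluing the local trivializations.
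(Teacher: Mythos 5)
Your proposal is correct and follows essentially the same route as the paper: both use Corollary~\ref{cor:profinite fibres} to write $F=\varprojlim F/N_j$, push the finite quotients through the Borel construction over the simply-connected leaf $L$ to obtain an inverse system of finite-sheeted regular coverings, and invoke Theorem~\ref{thm:profinite} to identify the limit with $E$. The only cosmetic difference is that the paper first identifies $F/N_j$ with $\pi_1(X)/\Ker\theta_j$ via the surjectivity of $\theta_j$ (which it deduces from density of the leaves) before forming the Borel construction, whereas you apply the construction to $F/N_j$ directly; these agree precisely because of that surjectivity, which your argument uses implicitly.
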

\begin{proof}
In view of the above discussion it remains to prove the 'if' part. By Corollary \ref{cor:profinite fibres} the fibre $F$ of $p$ is a profinite topological group,
so there are normal subgroups of finite index $F_i \triangleleft F$ such that $F=\varprojlim F/F_i$. Let $\theta_i$ denote the composition of $\theta$ followed by
the quotient map
$$\theta_i\colon\pi_1(X)\stackrel{\theta}{\longrightarrow} F \longrightarrow F/F_i.$$
Since the image of $\theta$ consists of the points of $F$ that are contained in one leaf, and since the leaves of $p$ are dense in $E$, the image of $\theta$ is dense 
in $F$ which implies that $\theta_i$ is surjective. As a consequence $\Ker\theta_i$ is a normal subgroup of finite index in $\pi_1(X)$ and 
$$\pi_1(X)/\Ker\theta_i\cong F/F_i.$$
By considering the quotients $\pi_1(X)/\Ker\theta_i$ as right $\pi_1(X)$-spaces, the Borel construction yields finite-sheeted 
covering projections
$$p_i\colon (\pi_1(X)/\Ker\theta_i)\times_{\pi_1(X)} L\to X.$$
The covering projections $p_i$ form an inverse system and the canonical map 
$$F\times_{\pi_1(X)} L\to  \varprojlim\big[(\pi_1(X)/\Ker\theta_i)\times_{\pi_1(X)} L\big]$$
is clearly a fibre-preserving homeomorphism. In view of Theorem \ref{thm:profinite} we may conclude that the lifting projection $p\colon E\to X$ is equivalent to the 
inverse limit of finite-sheeted covering projections $p_i$.
\end{proof}

\end{document}